\title{Fano hypersurfaces with arbitrarily large degrees of irrationality\vspace{-1ex}}
\author{Nathan Chen and David Stapleton}
\newcommand\blfootnote[1]{%
  \begingroup
  \renewcommand\thefootnote{}\footnote{#1}%
  \addtocounter{footnote}{-1}%
  \endgroup
}
\newtheorem{thm}{Theorem}
\numberwithin{thm}{section}
\newtheorem{lem}[thm]{Lemma}
\newcommand{\hr}[2]{\hyperref[#1]{#2}}
\newtheorem{Lthm}{Theorem}
\newtheorem{Lcor}[Lthm]{Corollary}
\newtheorem{Lprop}[Lthm]{Proposition}
\theoremstyle{definition}
\newtheorem{remark}[thm]{Remark}
\newtheorem{example}[thm]{Example}
\newtheorem{definition}[thm]{Definition}
\newtheorem{construction}[thm]{Construction}
\def\P{{\mathbb P}}
\def\Frac{{\mathrm{Frac}}}
\def\C{{\mathbb C}}
\def\O{{\mathcal O}}
\def\M{{\mathcal M}}
\def\pf{{\mathfrak{p}}}
\def\Oc{{\mathcal O}}
\def\X{{\mathscr X}}
\def\Spec{{\mathrm{Spec}}}
\def\kapbar{{\overline{\kappa}}}
\def\etabar{{\overline{\eta}}}
\def\min{{\mathrm{min}}}
\def\max{{\mathrm{max}}}
\def\dra{{\dashrightarrow}}
\def\ra{{\rightarrow}}
\def\cl{{\colon}}
\def\deg{{\mathrm{deg}}}
\def\irr{{\mathrm{irr}}}
\def\min{{\mathrm{min}}}
\def\bir{{\simeq_{\mathrm{bir}}}}
\def\Tr{{\mathrm{Tr}}}
\def\dim{{\mathrm{dim}}}
\def\Graph{{\mathrm{Graph}}}
\def\Oc{{\mathcal O}}
\begin{document}
\maketitle
\vspace{-20pt}
\section*{Introduction}

\thispagestyle{empty}

There has been a great deal of interest in studying questions of rationality of various flavors. Recall that an $n$-dimensional variety $X$ is \textit{rational} if it is birational to $\P^n$ and \textit{ruled} if it is birational to $\P^1\times Z$ for some variety $Z$. Let $X\subset \P^{n+1}$ be a degree $d$ hypersurface. In \cite{Kollar95}, Koll\'ar proved that when $X$ is very general and $d \geq (2/3)(n+3)$ then $X$ is not ruled (and thus not rational). Recently these results were generalized and improved by Totaro \cite{Totaro} and subsequently by Schreieder \cite{Schreieder18}. Schreieder showed that when $X$ is very general and $d \geq \log_{2}(n)+2$ then $X$ is not even stably rational. In a positive direction, Beheshti and Riedl \cite{BR19} proved that when $X$ is smooth and $n\ge 2^{d!}$ then $X$ is at least \textit{unirational}, i.e., dominated by a rational variety. \blfootnote{\hspace{-12pt}The first author's research is partially supported by the National Science Foundation under the Stony Brook/SCGP RTG grant DMS-1547145.}

Given a variety $X$ whose non-rationality is known, one can ask if there is a way to measure ``how irrational" $X$ is. One natural invariant in this direction is the \textit{degree of irrationality}, defined as
\[ \irr(X) \coloneqq \min \{ \delta > 0 \mid \exists \text{ degree $\delta$ rational dominant map } X \dashrightarrow \P^{n} \}. \]
For instance, Bastianelli, De Poi, Ein, Lazarsfeld, and Ullery \cite{BDELU17} computed the degree of irrationality for very general hypersurfaces of degree $d \geq 2n+1$ by using the positivity of the canonical bundle. Naturally, one is tempted to ask what can be proved about hypersurfaces with negative canonical bundle.

Let $X\subset \P^{n+1}$ be a smooth hypersurface of degree $d$. We define the \textit{codegree} of $X$ to be the quantity
\[
e \colonequals n+2-d,
\]
so that $K_X = -eH$. When $X$ is Fano (in other words $e>0$), this coincides with the \textit{Fano index}. Our main result gives the first examples of Fano varieties with arbitrarily large degree of irrationality.

\begin{Lthm}\label{IrrHyp}
Let $X_{n,e}$ be a very general hypersurface over $\C$ of dimension $n$ and codegree $e$. For any fixed $e$, there exists $N>0$ such that for all $n>N$
\[
\irr(X_{n,e})\ge \frac{1}{4}\sqrt{n}.
\]
\end{Lthm}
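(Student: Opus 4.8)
The plan is to degenerate to characteristic $p$ and invoke Koll\'ar's method of differential forms. Over $\mathbb{C}$ a Fano hypersurface is rationally connected, so all of its (pluri)canonical sections vanish and the positivity technique of \cite{BDELU17} is unavailable; but in characteristic $p$ a smooth Fano hypersurface can fail to be separably rationally connected, and a suitable purely inseparable cover of it can carry many canonical forms. First I would reduce to a characteristic-$p$ statement. The degree of irrationality does not increase under specialization --- in a flat family of $n$-folds over a DVR, a dominant degree-$\delta$ map from the generic fibre to $\mathbb{P}^n$ spreads out over the base and restricts to a dominant map of degree $\le\delta$ on the special fibre --- so, stratifying by the Hilbert polynomial of the graph and passing to closures, the locus in $\mathbb{P}H^0(\mathbb{P}^{n+1},\mathcal{O}(n+2-e))$ of hypersurfaces admitting a dominant degree-$\le\delta$ map to $\mathbb{P}^n$ is a countable union of \emph{closed} subsets. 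Since this Hilbert scheme is irreducible over $\Spec\mathbb{Z}$, it suffices to exhibit, for each fixed $e$, each $n\gg_e 0$, and infinitely many primes $p$, a single (possibly singular --- $\irr$ is a birational invariant) hypersurface $X_0\subset\mathbb{P}^{n+1}_{\overline{\mathbb{F}_p}}$ of dimension $n$ and codegree $e$ with $\irr(X_0)\ge\tfrac14\sqrt n$: a smaller-degree map on a very general complex hypersurface, spread out over $\mathbb{Z}[1/N]$ and reduced modulo a prime $p\nmid N$, would then contradict the characteristic-$p$ bound.

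\emph{The characteristic-$p$ model.} I would build $X_0$ so that, although it is Fano, it carries an iterated purely inseparable structure --- a tower of $p$-power covers, or an inseparable fibration over an auxiliary positive-dimensional base --- encoded in the defining equation in the style of Koll\'ar's examples \cite{Kollar95}. On a resolution $\widetilde Y$ of the associated cover $Y$, each inseparable step twists the canonical bundle up by a term of the shape $(p-1)\cdot(\text{branch divisor})$, so $\omega_{\widetilde Y}$ becomes big even though $X_0$ is Fano. The key numerics: the codegree constraint forces $\deg X_0=n+2-e$ to lie near the maximum $n+1$, which simultaneously bounds the number of inseparable steps and the degrees of the branch divisors; balancing these two against each other is exactly what produces a square root, and one can arrange on $\widetilde Y$ a family of $\sim\sqrt n$ independent canonical-type forms that moreover separate $\sim\sqrt n$ general points.

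\emph{From forms to $\irr$, and the main obstacle.} With such a $\widetilde Y$ in hand I would run a Bastianelli--De Poi--Ein--Lazarsfeld--Ullery-type argument: a dominant degree-$\delta$ map $X_0\dashrightarrow\mathbb{P}^n$ composes with $\widetilde Y\to X_0$, of bounded inseparable degree, to a dominant map of degree $\delta\cdot\deg(\widetilde Y/X_0)$, and the abundance of canonical forms separating general points of $\widetilde Y$ forces this degree to exceed $\sim\sqrt n$; dividing out $\deg(\widetilde Y/X_0)$ gives $\irr(X_0)\ge\tfrac14\sqrt n$. I expect the hard part to be the previous step together with the separation-of-points property it must feed into this one: one must realize, inside a hypersurface of the \emph{smallest} possible degree range and for \emph{every} codegree $e$ once $n$ is large, an inseparable geometry whose canonical invariants are not merely nonzero (which already recovers Koll\'ar's non-ruledness, i.e. $\irr\ge 2$) but grow like $\sqrt n$ and separate enough points to drive the degree-of-irrationality estimate. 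The specialization bookkeeping and the numerical extraction from the forms are routine by comparison.
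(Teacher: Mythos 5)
Your overall strategy (reduce mod $p$, Koll\'ar-style inseparable geometry, differential forms separating points, a trace argument) matches the paper's in broad outline, but there are two genuine gaps, and the first one is precisely where you declare the work to be ``routine by comparison.''

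First, the specialization step. You assert that a dominant degree-$\delta$ map $\X_\eta\dra\P^n$ ``spreads out over the base and restricts to a dominant map of degree $\le\delta$ on the special fibre.'' This is false as stated: since $\X_T$ is normal the map extends over the generic point of the special fibre, but the extended map may \emph{contract} the special fibre to a lower-dimensional subvariety of $\P^n_\kappa$, so you get neither dominance nor generic finiteness there, and the naive ``countable union of closed subsets'' conclusion does not follow. Whether $\irr$ itself specializes is exactly what is \emph{not} known in general (the paper only proves it under strong extra hypotheses in Proposition~\ref{ExSpec}). The paper's fix --- its main technical result, Theorem~\ref{DegMaps} --- is to replace ``maps to $\P^n$'' by ``maps to ruled varieties'' throughout: one applies an Abhyankar--Zariski modification of the target so that the image of the special fibre becomes a divisor, and then invokes Matsusaka's theorem to conclude that this divisor, being a specialization of a ruled variety, is itself ruled. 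Since $\P^n$ is ruled, a lower bound on degrees of maps to ruled varieties still bounds $\irr$ from below. Without this (or a substitute), your reduction to characteristic $p$ does not go through.

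Second, the architecture of your characteristic-$p$ model is inverted relative to Koll\'ar's, and the inversion breaks the numerics. In the paper the degenerate fibre $\X_\kappa$ (of degree $pd=n+2-e$) \emph{is} the purely inseparable degree-$p$ cover of an auxiliary smooth hypersurface $Y$ of degree $d$; the line bundle $\M=\nu^*\Oc(d-e)$ injects into $\wedge^{n-1}\Omega_{\X'_\kappa}$ on a resolution of $\X_\kappa$ itself, so the map to be bounded lives on something birational to $\X_\kappa$ and needs no precomposition with an inseparable cover. One then splits into cases: a \emph{separable} map to a ruled variety is bounded via the trace of forms and separation of $d-e\approx n/p$ points (Lemma~\ref{InjLB}), while an \emph{inseparable} one has degree divisible by $p$; choosing $p\sim\sqrt{n}/2$ balances the two and yields the square root. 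In your version the forms live on an inseparable cover $\widetilde Y\to X_0$, and you bound $\deg(\varphi)\cdot\deg(\widetilde Y/X_0)$ before ``dividing out'': but (a) the composite $\widetilde Y\to X_0\dra\P^n$ is then inseparable, so the trace/sum-over-fibres mechanism you need does not apply to it, and (b) even granting the bound, dividing by $\deg(\widetilde Y/X_0)\ge p\sim\sqrt n$ leaves you with $\deg(\varphi)\gtrsim 1$, not $\sqrt n/4$; the codegree constraint caps the separating power of the available line bundle at roughly $n/p$ points, so there is no room to absorb that division. You would need to reorganize the construction so that the hypersurface being studied sits on top of the inseparable cover, as in Koll\'ar's degeneration.
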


\noindent In fact, we prove the stronger statement that the minimal degree map from $X_{n,e}$ to a ruled variety is bounded from below by $\sqrt{n}/4$. To the best of our knowledge, these give the first examples of rationally connected varieties $X$ with $\irr(X)\ge 4$. (Irrational Fano varieties $X$ have $\irr(X)\ge 2$, and rational covers of degree 2 always admit birational involutions, so by the Noether--Fano method a general quartic threefold $X$ satisfies $\irr(X)=3$.)

The proof of Theorem A proceeds by extending ideas from Koll\'ar's paper \cite{Kollar95}, using a specialization to characteristic $p>0$. This involves two main additions to the arguments in \cite{Kollar95}. First, we use positivity considerations involving separation of points to show that the hypersurfaces constructed by Koll\'ar do not admit low degree maps to ruled varieties. Our main technical result then asserts that such mappings, if they exist, behave well in families. Specifically, given a family of projective varieties over the spectrum of a DVR, we prove that the minimal degree of a rational map to a ruled variety can only drop upon specialization. To be more precise, let $T$ be the spectrum of a DVR with generic point $\eta$ and residue field $\kappa$, and let $\X_T$ be an integral normal scheme which is projective over $T$ such that $\X_\eta$ is geometrically integral. With this set-up in mind, we have:

\begin{Lthm}\label{RuledSpec}
If $\X_\eta$ admits a dominant and generically finite rational map to a ruled variety with degree $\le d$, then so does every component $\X'_\kappa \subset \X_\kappa$.
\end{Lthm}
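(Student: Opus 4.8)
The plan is to translate both the hypothesis and the conclusion into statements about valuations of function fields, and then to invoke Ohm's ruled residue theorem. Write $K:=\kappa(\eta)$ and $n:=\dim\X_\eta$. Since $\X_T$ is integral and projective over the spectrum of a DVR it is automatically flat over $T$, so every component of $\X_\kappa$ has dimension $n$; I would fix a component $\X'_\kappa\subseteq\X_\kappa$ and let $\xi'$ be its generic point. Then $\xi'$ has codimension $1$ in $\X_T$, and because $\X_T$ is normal the local ring $\Oc_{\X_T,\xi'}$ is a discrete valuation ring. Let $v$ be the corresponding valuation of $K(\X_\eta)=K(\X_T)$: it dominates the valuation of $K$ (residue field $\kappa$), has residue field $K(\X'_\kappa)$, and has value group of rank one.

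Next I would unwind the hypothesis. A ruled variety is by definition birational to $\P^1\times Z$ for some variety $Z$, so a dominant, generically finite rational map $\X_\eta\dra B$ of degree $\le d$ to a ruled $B$ is the same datum as a subfield $M=N(s)\subseteq K(\X_\eta)$, a rational function field in one variable $s$ over $N:=K(Z)$, with $\mathrm{trdeg}_K N=n-1$ and $[K(\X_\eta):M]\le d$. Restricting $v$ to $M$ gives a valuation $w$ with residue field $\overline M$, and restricting further to $N$ gives a valuation $u$ with residue field $\overline N\subseteq\overline M$; both $w$ and $u$ are nontrivial on $K$, so their value groups are rank-one subgroups of that of $v$. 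The fundamental inequality, applied to the finite extension $M\subseteq K(\X_\eta)$ and the valuation $w$ (of which $v$ is an extension), bounds the residue degree: $[K(\X'_\kappa):\overline M]\le[K(\X_\eta):M]\le d$. In particular $\mathrm{trdeg}_\kappa\overline M=\mathrm{trdeg}_\kappa K(\X'_\kappa)=n$.

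The heart of the matter is to recognize $\overline M$ as the function field of a ruled $\kappa$-variety. Abhyankar's inequality for $u$ over $(K,v|_K)$ gives $\mathrm{trdeg}_\kappa\overline N\le\mathrm{trdeg}_K N=n-1$ (the rational-rank term dies because the value groups of $u$ and of $v|_K$ have rank one), and Abhyankar's inequality for $w$ over $(N,u)$ gives $\mathrm{trdeg}_{\overline N}\overline M\le\mathrm{trdeg}_N M=1$. Summing these and comparing with $\mathrm{trdeg}_\kappa\overline M=n$ forces equality in both. Equality in the first says $u$ is an Abhyankar valuation of $N/K$, so $\overline N$ is finitely generated over $\kappa$ of transcendence degree $n-1$; equality in the second says the extension $w/u$ of valuations of the rational function field $M=N(s)$ is residually transcendental, so Ohm's ruled residue theorem (in the rank-one case, the one relevant here) applies and gives $\overline M=k_0(\theta)$ with $\theta$ transcendental over $k_0$ and $k_0/\overline N$ finite. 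Hence $k_0$ is finitely generated over $\kappa$ of transcendence degree $n-1$, say $k_0=\kappa(W)$ with $\dim W=n-1$, and $\overline M=\kappa(\P^1\times W)$. The inclusion $\overline M\subseteq K(\X'_\kappa)$ of degree $\le d$ is then exactly a dominant, generically finite rational map $\X'_\kappa\dra\P^1\times W$ of degree $\le d$ onto a ruled variety, as required.

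The whole argument is short once the right mechanism is in hand, so I expect the main conceptual obstacle to be recognizing that Ohm's ruled residue theorem governs the situation — and, relatedly, being careful that restricting the divisorial valuation $v$ to the intermediate field $M$ leaves its value group of rank one. (Ohm's theorem can fail for valuations of higher rank, so this is not a formality: it works here only because nonzero subgroups of $\Z$ are again of rank one.) The remaining fussiness concerns the exact meaning of ``ruled'': if it is meant geometrically, over $\overline K$, rather than over $K$, one first passes to a finite extension of the DVR and must check — using the geometric integrality of $\X_\eta$ — that this does not spoil the degree bound.
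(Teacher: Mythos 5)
Your proposal is correct, and it reaches the conclusion by a genuinely different mechanism than the paper. The paper argues geometrically: using normality and properness it extends $\varphi$ to a map $\varphi_T$ defined at all codimension-one points of $\X_T$, invokes the Abhyankar--Zariski lemma to replace $Z_T\times\P^1$ by a birational model $R_T$ that is regular at the (codimension-one) image of the generic point of $\X'_\kappa$, and then applies Matsusaka's specialization theorem for ruledness to conclude that the resulting divisor $R'_\kappa\subset R_\kappa$ is ruled, with the induced map $\X'_\kappa\dra R'_\kappa$ of degree $\le d$. You instead stay entirely inside valuation theory: the divisorial valuation $v$ at the generic point of $\X'_\kappa$ is restricted to the ruled subfield $M=N(s)$, the fundamental inequality gives the residue-degree bound $[K(\X'_\kappa):\overline M]\le d$ cleanly (where the paper only remarks that the degree bound ``is not hard to see''), the two Abhyankar inequalities force $w/u$ to be residually transcendental with $\overline N$ a function field, and Ohm's ruled residue theorem identifies $\overline M$ as a ruled function field. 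The two arguments are structurally parallel --- Ohm's theorem is precisely the valuation-theoretic avatar of Matsusaka's theorem, and your direct handling of the restricted valuation replaces the model-building of the Abhyankar--Zariski step --- but the external inputs are different, and your route avoids constructing any models over $T$. One small caveat: your parenthetical claim that the ruled residue theorem ``can fail for valuations of higher rank'' is doubtful --- it was the earlier partial results (Nagata) that carried rank restrictions, while Ohm's theorem holds in general --- but this is harmless since your $v$ is divisorial and only the rank-one case is ever used. You might also note explicitly that $\overline M$ (hence $k_0$) is automatically finitely generated over $\kappa$ as an intermediate field of the finitely generated extension $K(\X'_\kappa)/\kappa$, which slightly shortens the appeal to equality in Abhyankar's inequality.
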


\noindent A similar result holds for the geometric generic fiber (see Theorem~\ref{DegMaps}).

We give other applications of Theorem~\ref{RuledSpec} concerning the behavior of the degree of irrationality in certain families. The last few years have seen major progress in understanding the behavior of rationality and stable rationality in families. Hassett, Pirutka, and Tschinkel \cite{HPT18} showed that there are families of varieties where there is a dense set of rational fibers, but the very general member is irrational. Nicaise and Shinder \cite{NicShi19} (resp. Kontsevich and Tschinkel \cite{KonTsch19}) established that stable rationality (resp. rationality) specializes in smooth projective families. The behavior of unirationality and the degree of irrationality in families is understood to a lesser extent. Applying Theorem~\ref{RuledSpec}, we show that in certain families the degree of irrationality can only drop upon specialization.

\begin{Lprop}\label{ExSpec}
Let $\pi\cl \X\ra T$
be a smooth projective family of complex varieties over a smooth irreducible curve $T$ with marked point $0\in T$. Assume that a very general fiber $\X_t$ is either \leavevmode
\begin{enumerate}
    \item a surface with $H^{1,0}(\X_t)=0$, or
    \item a simply-connected threefold with $\omega_{\X_t} \cong \O_{\X_t}$ (i.e., a strict Calabi-Yau threefold).
\end{enumerate}
If $\irr(\X_t)\le d$, then $\irr(\X_0)\le d$.
\end{Lprop}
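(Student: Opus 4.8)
The plan is to derive Proposition~\ref{ExSpec} from Theorem~\ref{RuledSpec} (equivalently from its geometric‑generic‑fiber version Theorem~\ref{DegMaps}) via a spreading‑out argument, and then to upgrade the resulting map to a \emph{ruled} variety into a map to $\P^n$ using the hypotheses on $\X_t$.

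\emph{Spreading out.} For each $m\ge 1$, the relative Grassmannian of $(n+1)$-dimensional subspaces of $\pi_*\O_{\X}(m\mathcal H)$ (for a relative polarization $\mathcal H$) carries a tautological rational map to $\P^n$, and the locus where that map is dominant, generically finite, and of degree $\le d$ is constructible, with constructible image $U_m\subseteq T$. Since any dominant rational map $\X_t\dashrightarrow\P^n$ of degree $\le d$ is defined by rational functions of bounded pole order (with respect to the polarization on $\X_t$), one has $\{t\in T(\C) : \irr(\X_t)\le d\}\subseteq\bigcup_m U_m$. Each $U_m$ is finite or cofinite in the curve $T$, and as the union contains a co‑countable set, some $U_m$ must dominate $T$; a closed point of the corresponding parameter space lying over the geometric generic point $\bar\eta$ of $T$ then produces a dominant generically finite rational map of degree $\le d$ from $\X_{\bar\eta}$ to $\P^n$. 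Because $\P^n$ is ruled, Theorem~\ref{DegMaps} applied over the DVR $\O_{T,0}$ yields a dominant generically finite rational map $\psi\colon\X_0\dashrightarrow Y_0$ of degree $\le d$ with $Y_0$ ruled --- here the geometric special fiber is $\X_0$ itself, since its residue field $\C$ is algebraically closed.

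\emph{Upgrading to $\P^n$.} Write $Y_0\bir\P^1\times Z_0$ with $Z_0$ smooth projective and resolve $\psi$ to a generically finite morphism from a smooth projective $\tilde X$ birational to $\X_0$; it then suffices to prove $Z_0$ is rational, for then $Y_0\bir\P^n$ and $\irr(\X_0)\le\deg\psi\le d$. By deformation invariance (constancy of Hodge numbers in a smooth proper family, constancy of $\pi_1$ by Ehresmann's theorem, and specialization of a trivialization of $\omega_{\X/T}$ --- using upper‑semicontinuity of $h^0(\omega_{\X_t})$ and $h^0(\omega_{\X_t}^{-1})$ together with the fact that nonzero sections of $L$ and $L^{-1}$ on an integral proper variety force $L\cong\O$), the fiber $\X_0$ again satisfies the hypothesis: in case (1) it is a surface with $h^{1,0}=0$, and in case (2) a simply‑connected threefold with $\omega_{\X_0}\cong\O$, which by Serre duality also has $h^{2,0}=h^1(\O)=0$. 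In case (1), $Z_0$ is a curve, and pulling back $1$-forms along the generically finite map to $\P^1\times Z_0$ gives $h^0(\Omega^1_{Z_0})\le h^{1,0}(\X_0)=0$, so $Z_0\cong\P^1$. In case (2), $Z_0$ is a surface with $q(Z_0)=p_g(Z_0)=0$ (pull back $1$- and $2$-forms along the dominant map $\tilde X\dashrightarrow Z_0$). Passing to the Stein factorization of (a resolution of) $\tilde X\dashrightarrow\P^1\times Z_0\to Z_0$, with a smooth projective model $W$ of the base, we may assume $\tilde X\to W$ is a morphism with connected fibers, so that $\pi_1(W)=1$ and still $p_g(W)=q(W)=0$; since a Calabi--Yau threefold is not uniruled, the general fiber of $\tilde X\to W$ has genus $\ge 1$, and this, together with $\kappa(\tilde X)=0$ and the Iitaka inequality for a threefold fibered over a surface, forces $\kappa(W)\le 0$. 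But a surface with $\kappa=0$ and $p_g=q=0$ is birational to an Enriques surface, which has $\pi_1\cong\Z/2$; hence $\kappa(W)=-\infty$, and with $q(W)=0$ this makes $W$ rational. Then $Z_0$ is dominated by the rational surface $W$, so $Z_0$ is unirational and therefore rational by Castelnuovo's criterion, which completes case (2).

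\emph{Main obstacle.} The delicate step is the spreading‑out: one must convert the ``very general'' (analytic, countability) hypothesis into an algebraic statement about the geometric generic fiber, which requires organizing all degree‑$\le d$ rational maps to $\P^n$ into countably many constructible families over $T$ before invoking Theorem~\ref{DegMaps}. The remainder is essentially classical surface theory; in case (2) the one point that genuinely requires care is that one must pass to the Stein factorization $W$ --- not $Z_0$ itself --- so that simple‑connectedness is available to exclude an Enriques base.
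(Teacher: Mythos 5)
Your proposal is correct, and it reaches the same intermediate statement as the paper --- a dominant, generically finite, degree $\le d$ rational map $\X_0 \dra Z\times\P^1$ obtained from Theorem~\ref{DegMaps} --- but it gets there and proceeds from there by genuinely different means. For the reduction to the geometric generic fiber, the paper simply invokes Lemma~\ref{VGen}, which uses an abstract field isomorphism $\etabar\cong\C$ (Vial's lemma) to identify $\X_\etabar$ with a very general fiber; your explicit spreading-out through countably many constructible loci $U_m\subset T$ proves the same thing by hand, at the cost of more bookkeeping but without that trick. The more substantive divergence is in showing $Z$ is rational. The paper passes to the MRC fibration $Z\dra B$ and rules out $\dim B=1$ (via $h^{1,0}(\X_0)=0$) and $\dim B=2$ (via $\kappa(B)=0$, the \'etale cover $B'\ra B$ with $\omega_{B'}\cong\Oc_{B'}$, and simple connectedness of $\X_0$), so that $Z$ is rationally connected and hence rational. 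You instead work with $Z_0$ directly, replacing it by the Stein factorization $W$ so that $\pi_1(W)=1$, and combine $p_g(W)=q(W)=0$ with the subadditivity $\kappa(\tilde X)\ge\kappa(F)+\kappa(W)$ (Viehweg's $C_{3,2}$, legitimate here since the fibers are curves) and the fact that the only minimal surface with $\kappa=0$ and $p_g=q=0$ is Enriques, whose fundamental group is $\Z/2$. The two routes lean on comparable nontrivial inputs --- the paper's appeal to Birkar for $\kappa(B)=0$ plays the role of your Iitaka inequality, and your insistence on passing to $W$ rather than $Z_0$ to exploit simple connectedness is exactly parallel to the paper's use of the cover $B'\ra B$. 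Your deformation-invariance preamble (constancy of Hodge numbers, $\pi_1$, and triviality of $\omega_{\X_0}$ by semicontinuity applied to $\omega$ and $\omega^{-1}$) is implicit in the paper but worth making explicit as you do.
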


\noindent By work of the first author \cite{Chen19}, it is known that a very general abelian surface $A$ has $\irr(A)\le 4$. From Proposition~\ref{ExSpec}, we are able to deduce:

\begin{Lcor}\label{IrrAb}
Every complex abelian surface $A$ has $\irr(A)\le 4.$
\end{Lcor}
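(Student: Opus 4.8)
The shape of the argument I would aim for applies Proposition~\ref{ExSpec}, but not to $A$ itself, since an abelian surface has $h^{1,0}\neq 0$. The first move is to pass to the Kummer K3 surface $\mathrm{Km}(A)$, the minimal resolution of $A/\pm1$, which does satisfy $H^{1,0}(\mathrm{Km}(A))=0$. As $A\to A/\pm1$ is finite of degree $2$ and $A/\pm1$ is birational to $\mathrm{Km}(A)$, there is a dominant rational map $A\dashrightarrow\mathrm{Km}(A)$ of degree $2$; composing with a minimal-degree dominant rational map $\mathrm{Km}(A)\dashrightarrow\P^{2}$ gives $\irr(A)\le 2\,\irr(\mathrm{Km}(A))$. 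So it suffices to show $\irr(\mathrm{Km}(A))\le 2$ for every abelian surface $A$, i.e.\ that every Kummer surface is birationally a double plane.

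For a very general abelian surface this is classical: when $A$ is a very general principally polarized abelian surface, $A/\pm1$ is the Kummer quartic in $\P^{3}$ and projection from one of its sixteen nodes realizes $\mathrm{Km}(A)$ as a double plane, so $\irr(\mathrm{Km}(A))=2$; this is in effect the mechanism behind Chen's bound \cite{Chen19} $\irr(A)\le 4$ for very general $A$, read through $A\dashrightarrow A/\pm1\dashrightarrow\P^{2}$. More robustly, for \emph{every} $A$ the N\'eron--Severi lattice of $\mathrm{Km}(A)$ contains a nef class of square $2$: start from the class $L$ pulled back from the projective model $A/\pm1\subset\P^{N}$ given by the even part of $|2M|$ for a polarization $M$ of $A$, note that $L$ is orthogonal to the sixteen disjoint exceptional $(-2)$-curves, correct $L$ by a suitable integral combination of those curves so that the self-intersection becomes $2$ (possible by Lagrange's four-square theorem), and move the result into the nef chamber by a reflection. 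Thus $\mathrm{Km}(A)$ is a (quasi-)polarized K3 surface of degree $2$.

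I would then spread out. Since $\mathrm{Km}(A)$ carries a nef class of square $2$, it lies in the irreducible moduli space of degree-$2$ K3 surfaces, whose very general member is a genuine double plane. Choosing a general curve $T$ in this moduli space through $[\mathrm{Km}(A)]$ and lifting to a family $\pi\cl\X\ra T$ of K3 surfaces with a marked point $0\in T$ and $\X_{0}\cong\mathrm{Km}(A)$, a very general fiber $\X_{t}$ is a double plane, so $\irr(\X_{t})\le 2$, and every fiber has $H^{1,0}=0$ since it is a K3 surface. Proposition~\ref{ExSpec}(1), applied with $d=2$, gives $\irr(\mathrm{Km}(A))=\irr(\X_{0})\le 2$, whence $\irr(A)\le 4$ by the first paragraph; since $A$ was arbitrary, the corollary follows. (Equivalently, one may spread out inside the moduli of abelian surfaces of the polarization type carried by $A$ — which is irreducible — form the family of Kummer surfaces by blowing up the relative $2$-torsion and dividing by $[-1]$, and invoke Chen's result on the very general fiber.)

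The step I expect to be the crux is the one in the second paragraph: checking that \emph{every} Kummer surface — not just that of a very general principally polarized abelian surface — carries a nef class of square $2$, equivalently that $\mathrm{Km}(A)$ is a degeneration of double planes within a family of surfaces with $H^{1,0}=0$. Everything else is formal: the degree bound for compositions of rational maps, the irreducibility of the moduli spaces involved, and the verification that Proposition~\ref{ExSpec} applies to the family of K3 (or Kummer) surfaces constructed.
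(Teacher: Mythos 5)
Your argument is correct, and your closing parenthetical is in fact the paper's proof verbatim in outline: spread $A$ out in the (irreducible) moduli of abelian surfaces of its polarization type, form the family of Kummer surfaces with a simultaneous resolution to get K3 fibers, quote \cite{Chen19} for the very general fiber, and apply Proposition~\ref{ExSpec}(1). Your primary route is genuinely different and arguably more self-contained: instead of deforming $A$ and invoking Chen's theorem, you deform $\mathrm{Km}(A)$ inside the moduli of degree-$2$ quasi-polarized K3 surfaces, whose very general member is a double sextic with $\irr=2$ for elementary reasons, so \cite{Chen19} is not needed at all. The price is the lattice-theoretic crux you correctly isolate, and it does go through: for a polarization $M$ of type $(d_1,d_2)$ the induced class $\hat M\in\NS(\mathrm{Km}(A))$ has $\hat M^2=4d_1d_2$ and is orthogonal to the sixteen exceptional $(-2)$-curves $E_i$, so $\bigl(\hat M-\sum a_iE_i\bigr)^2=2$ whenever $\sum a_i^2=2d_1d_2-1$, which Lagrange supplies with at most four nonzero $a_i$; a class of square $2$ in the positive cone is then carried into the nef cone by the Weyl group of $(-2)$-classes. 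Two routine points you should still record to apply Proposition~\ref{ExSpec}: the curve $T$ through $[\mathrm{Km}(A)]$ must be chosen so that its very general point avoids the countably many bad loci (in particular the unigonal/hyperelliptic divisor where the degree-$2$ class fails to define a double plane), which a generic choice of $T$ guarantees, and one must lift from the coarse moduli space to an honest smooth projective family over a curve (e.g.\ via a level structure), exactly as the paper implicitly does on the abelian side. Both approaches buy something: the paper's is shorter given \cite{Chen19}; yours removes that dependence and in effect reproves the very general case.
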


In \S1, we prove a generalized version of Theorem~\ref{RuledSpec}, as well as Proposition~\ref{ExSpec} and Corollary~\ref{IrrAb}. In \S2, we prove Theorem~\ref{IrrHyp}. Throughout the paper, by \textit{variety} we mean an integral scheme of finite type over a field (not necessarily algebraically closed). By very general, we mean away from the union of countably many proper closed subsets. If $\X$ is a scheme over $T$ and $k$ is a field with a map $\Spec(k)\ra T$, by abuse of notation we write
\[
\X_k \colonequals \X\times_T\Spec(k).
\]

\noindent \textbf{Acknowledgments.} We are grateful for valuable conversations and correspondences with Iacopo Brivio, Lawrence Ein, Fran\c cois Greer, Kiran Kedlaya, J\'anos Koll\'ar, Robert Lazarsfeld, James M\textsuperscript{c}Kernan, John Ottem, John Sheridan, Jason Starr, Burt Totaro, and Ruijie Yang.

\section{Maps to ruled varieties specialize}

The goal of this section is to prove a slightly more general version of Theorem~\ref{RuledSpec}. Let $T=\Spec(A)$ be the spectrum of a DVR with uniformizer $t$, fraction field $\eta = \Frac(A)$, and residue field $\kappa=A/t$. Let $\etabar$ and $\kapbar$ be their respective algebraic closures. Assume that $\X_{T} \rightarrow T$ is a flat projective morphism, and $\X_{T}$ is a normal integral scheme such that $\X_\eta$ is geometrically integral.

\begin{thm}\label{DegMaps} With the set-up above, \leavevmode
\begin{enumerate}
    \item If $\X_{\eta}$ admits a dominant, generically finite rational map to a ruled variety
    \[
    \varphi \cl \X_{\eta} \dashrightarrow Z_\eta \times \P^{1}
    \]
    with $\deg(\varphi)\le d$, then so does every component $\X_{\kappa}' \subset \X_{\kappa}$.
    \item Suppose that $\X'_\kappa\subset \X_\kappa$ is a component that is geometrically reduced and geometrically irreducible. If $\X_{\overline{\eta}}$ admits a dominant rational map to a ruled variety
    \[
    \varphi \cl \X_{\etabar} \dashrightarrow Z_\etabar \times \P^{1}
    \]
    with $\deg(\varphi)\le d$, then so does $\X'_\kapbar$.
\end{enumerate}
\end{thm}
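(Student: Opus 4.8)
The plan is to prove part~(1) and deduce part~(2) from it by base change. For~(1) the idea is to spread the map out over $T$ and then play the normality of $\X_T$ on the source against a well-chosen normal model of the target. Pick a normal projective model $Z_T\to T$ of $Z_\eta$ and set $R_T:=Z_T\times_T\P^1_T$; this is again normal, projective and flat over $T$, and every component of its special fibre $R_\kappa=Z_\kappa\times\P^1$ has the form $Z''_\kappa\times\P^1$, in particular is ruled. Let $\Gamma_T$ be the normalization of the closure in $\X_T\times_T R_T$ of the graph of $\varphi$. Then $\Gamma_T$ is normal, integral and flat over $T$, with a birational projective morphism $p\colon\Gamma_T\to\X_T$ and a dominant, generically finite morphism $q\colon\Gamma_T\to R_T$ of degree $\le d$. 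Let $q=q_2\circ q_1$ be the Stein factorization, with $q_1\colon\Gamma_T\to R^*_T$ birational projective and $q_2\colon R^*_T\to R_T$ finite of degree $\le d$; then $R^*_T$ is normal, integral and flat over $T$, so $R^*_\kappa$ and $R_\kappa$ are pure of dimension $n$, and since the generic point of each component of $R_\kappa$ has codimension one in $R_T$ the sheaf $q_{2*}\Oc_{R^*_T}$ is locally free there of rank $\deg q_2$. Hence every component of $R^*_\kappa$ maps finitely and dominantly, with generic degree $\le d$, onto a component $Z''_\kappa\times\P^1$ of $R_\kappa$, so every component of $R^*_\kappa$ admits a dominant, generically finite rational map of degree $\le d$ to a ruled variety. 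It therefore suffices to show that every component $\X'_\kappa\subseteq\X_\kappa$ is birational to a component of $R^*_\kappa$.

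Fix such a component $\X'_\kappa$ with generic point $\xi_0$. It has codimension one in $\X_T$, so by normality $\Oc_{\X_T,\xi_0}$ is a DVR, giving a discrete valuation $w$ of the function field $F$ of $\X_\eta$ with residue field $\kappa(\X'_\kappa)$ and dominating $A$. The valuative criterion of properness for $\Gamma_T\to T$ produces the centre of $w$ on $\Gamma_T$: a point $\xi'_0$ with $p(\xi'_0)=\xi_0$ and $\Oc_{\Gamma_T,\xi'_0}\subseteq\Oc_{\X_T,\xi_0}$ dominated by it. A dimension count, using that $\Gamma_\kappa$ is pure of dimension $n$, shows $\Gamma'_\kappa:=\overline{\{\xi'_0\}}$ is a component of $\Gamma_\kappa$, so $\Oc_{\Gamma_T,\xi'_0}$ is itself a DVR; as a DVR dominated by a DVR with the same fraction field must equal it, $\Oc_{\Gamma_T,\xi'_0}=\Oc_{\X_T,\xi_0}$, and $p|_{\Gamma'_\kappa}\colon\Gamma'_\kappa\to\X'_\kappa$ is birational. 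Running the valuative criterion again for $R^*_T\to T$ gives the centre $\zeta_0=q_1(\xi'_0)$ of $w$ on $R^*_T$ with $\Oc_{R^*_T,\zeta_0}\subseteq\Oc_{\X_T,\xi_0}$ dominated by it. If $\zeta_0$ has codimension one in $R^*_T$ — equivalently, $q_1$ does not contract $\Gamma'_\kappa$ — then $\Oc_{R^*_T,\zeta_0}$ is a DVR, hence equals $\Oc_{\X_T,\xi_0}$, so $\X'_\kappa$ is birational to the component $\overline{\{\zeta_0\}}$ of $R^*_\kappa$ and we are done.

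The point that can fail — and the main obstacle — is that $q_1$ may contract $\Gamma'_\kappa$; this genuinely happens, for instance when a family of smooth quadric surfaces degenerates to a union of two planes. To handle it I would revert to the family of ``ruling curves'': the fibres of the composite $\X_\eta\dashrightarrow Z_\eta$ are curves of gonality $\le d$ covering $\X_\eta$, and they spread out over $T$ to a family of curves on $\X_T$ whose parameter space (a Chow variety of $1$-cycles on $\X_T$) is proper, so the family extends over $0\in T$. A contracted component $\Gamma'_\kappa$ then still carries a fibration over a component of $Z_\kappa$ whose general fibre is a limit of these curves, hence of gonality $\le d$ by lower semicontinuity of gonality; combined with $\X'_\kappa\bir\Gamma'_\kappa$, this produces a dominant generically finite map of degree $\le d$ from $\X'_\kappa$ to a ruled variety $Z''_\kappa\times\P^1$. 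I expect this to be the crux: one must verify flatness of the family of curves, irreducibility and reducedness of the limiting curve and the identification of its function field, and the transcendence-degree bookkeeping that controls the dimension of the base; it may also be necessary to replace $Z_T$ by another birational model in order to force $\zeta_0$ to be a divisor on it.

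Finally, part~(2) follows from part~(1): the map over $\X_{\etabar}$ is defined over a finite extension $\eta'/\eta$, so, taking a DVR $A'$ dominating $A$ with fraction field $\eta'$ and residue field $\kappa'$ and applying part~(1) to a suitable component of the normalization of $\X_T\times_T\Spec(A')\to\Spec(A')$ — where the hypotheses that $\X'_\kappa$ be geometrically reduced and geometrically irreducible ensure that $\X'_\kappa\times_\kappa\kappa'$ remains a well-behaved reduced irreducible divisor upstairs — and then extending the base field of the resulting map from $\kappa'$ up to $\kapbar$, we obtain the statement for $\X'_{\kapbar}$.
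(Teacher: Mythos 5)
Your reduction for part (1) — spread out the graph, normalize, Stein-factorize, and match divisorial valuations via the DVR $\Oc_{\X_T,\xi_0}$ — is sound as far as it goes, and your reduction of (2) to (1) via the integral closure of $A$ in a field of definition is essentially the paper's. You have also correctly isolated the crux: the strict transform $\Gamma'_\kappa$ of $\X'_\kappa$ in the graph may be contracted by the map to the target model. But your proposed resolution of that crux is where the proof breaks. The ``ruling curves'' argument rests on lower semicontinuity of gonality for the limiting curves, which is not something you can invoke off the shelf here: the limits produced by properness of the Chow variety can be reducible, non-reduced, or of the wrong dimension, and the assertion that such a limit has gonality $\le d$ is essentially the one-dimensional case of the very theorem you are proving. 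Moreover, even granting a fiberwise gonality bound, a fibration in curves of gonality $\le d$ over a base $B$ does not obviously yield a degree $\le d$ dominant map to a \emph{product} $Z''_\kappa\times\P^1$; you would still need to assemble the individual pencils into a single rational map over $B$, i.e.\ to birationally trivialize a $\P^1$-fibration, which is an unaddressed (and in general nontrivial) step. Finally, when $q_1$ contracts $\Gamma'_\kappa$ its image need not have dimension $n-1$, so the claimed ``fibration over a component of $Z_\kappa$'' need not exist in the form you describe.

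The paper resolves the contraction problem on the target side rather than the source side, and your parenthetical closing remark — that one may need to replace $Z_T$ by another birational model so that the center becomes a divisor — is in fact the entire argument, not a fallback. Since $\X_T$ is normal, $\Oc_{\X_T,\xi_0}$ is a divisorial valuation of $K(\X_T)$; its restriction to the finite-index subfield $K(Z_T\times_T\P^1)$ has residue field of the right transcendence degree by Abhyankar's inequality, so by the Abhyankar--Zariski lemma \cite[Lem.~2.22]{Kollar13} there is a birational morphism $\mu\cl R_T\ra Z_T\times\P^1$ on which the center of this valuation is a codimension $1$ point at which $R_T$ is regular. The resulting divisor $R'_\kappa\subset R_\kappa$ is no longer visibly of the form $Z''\times\P^1$, and this is exactly where the second missing ingredient enters: Matsusaka's theorem \cite[Thm.~IV.1.6]{Kollar96} guarantees that a divisor arising this way in the special fiber of a family with ruled generic fiber is itself ruled. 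The induced map $\X'_\kappa\dra R'_\kappa$ on centers then has degree $\le d$ by the usual $\sum e_if_i\le n$ inequality for extensions of DVRs. Without these two inputs — Zariski's realization of the restricted valuation as a divisor on a model of the target, and Matsusaka's specialization of ruledness — the contracted case remains a genuine gap in your argument.
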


\begin{proof}
First we prove (1). We may assume $Z$ is the generic fiber of a reduced and irreducible projective scheme $Z_T$. As $\X_T$ is normal and the schemes are projective over $T$, the rational map $\varphi$ extends to a map $\varphi_T$ which is defined on all codimension 1 points. By an argument of Abhyankar and Zariski \cite[Lem. 2.22]{Kollar13} for any codimension 1 point $\delta\in X$, there is a birational morphism $\mu\cl R_T\ra Z_T\times \P^1$ so that the induced rational map
\[
\varphi'_T\cl \X_T\dra R_T
\]
satisfies $\varphi'_T(\delta)$ is a codimension 1 point and $R_T$ is regular at the generic point of $\varphi'_T(\delta)$. When $\delta$ is the generic point of $X'_\kappa$, then the closure of the image $R'_\kappa:=\overline{\varphi'_T(\delta)}\in R_\kappa$ satisfies the hypotheses of Matsusaka's Theorem (see \cite[Thm. IV.1.6]{Kollar96}). Therefore $R'_\kappa$ is ruled, and we have produced a dominant finite rational map $\varphi'_\kappa\cl \X_\kappa'\dra R'_\kappa$. It is not hard to see that $\deg(\varphi_\kappa')\le d$.


Now we show (1) implies (2). There is a finite field extension $\eta \subset L\subset \etabar$ such that $\varphi$ is defined over $L$, i.e. there is a map $\X_L\dra Z_L\times \P^1$ whose base change to $\etabar$ is $\varphi$. Let $B\subset L$ be the integral closure of $A$. Let $\pf\subset B$ denote the ideal of a closed point over $(t=0)$ with residue field $\kappa_B=B/\pf$. Thus we have a map of DVRs $A\ra B_\pf$. Let $S=\Spec(B_\pf)$ and let $\X_S \colonequals \X_T\times_T S$ be the base change of $\X_T$ to $S$. (1) implies that every component of the central fiber of the normalization $(\X_S^{\mathrm{norm}})_{\kappa_B}$ admits a map to a ruled variety with degree bounded by $d$.

It remains to show that for any component $\X'_\kappa\subset \X_\kappa$ which is geometrically reduced and geometrically irreducible (as in the statement of the theorem), the normal locus of $\X_S$ contains the generic point of the divisor
\[
\X'_{\kappa_B} \colonequals \X'_\kappa \times_\kappa \kappa_B \subset \X_S.
\]
The assumption that $\X'_\kappa$ is geometrically reduced implies that $\X_S$ is regular at the generic point of $\X'_{\kappa_B}$. So in particular, $\X_{S}$ is normal at the generic point of $\X_{\kappa_B}'$.
\end{proof}

Now we prove two results about the behavior of the minimal degree map to a ruled variety.

\begin{lem}\label{BaseChangeMaps}
Let $k\subset L$ be a field extension with both fields algebraically closed. If $X$ is a variety of dimension $n$ over $k$, then $X$ admits a rational map of degree $d$ to a ruled variety over $k$ $\iff$ $X_L \coloneqq X\times_k L$ admits a rational map of degree $d$ to a ruled variety over $L$.
\end{lem}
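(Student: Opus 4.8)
The plan is to prove the nontrivial ($\Leftarrow$) direction by a spreading-out argument, the ($\Rightarrow$) direction being immediate by base change. So suppose $X_L$ admits a dominant, generically finite rational map $\varphi_L \cl X_L \dashrightarrow W_L$ of degree $d$, where $W_L$ is ruled over $L$, say $W_L$ is birational to $Z_L \times \P^1$ for some variety $Z_L$ over $L$. First I would descend all of this data to a finitely generated sub-$k$-algebra $R \subset L$: there is such an $R$ with $\Spec R$ an integral affine $k$-variety, together with an $R$-scheme $Z_R$, a rational map $\Phi_R \cl X_R := X \times_k R \dashrightarrow Z_R \times_R \P^1_R$, such that base change along $R \hookrightarrow L$ recovers $Z_L$, $W_L$ and $\varphi_L$ up to birational equivalence over $L$. (Here I use that $X_L = X_R \times_R L$ and that $L = \varinjlim R$.) By shrinking $\Spec R$ to a dense open, I may assume $Z_R \to \Spec R$ has geometrically integral fibers, that $\Phi_R$ is defined and generically finite of degree $d$ on every fiber over $\Spec R$, and that $X_R \to \Spec R$ has geometrically integral fibers as well — all of these are constructible/open conditions on the base by standard limit arguments (EGA IV, §8–9, §12).

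Next I would specialize to a closed point. Since $k$ is algebraically closed and $\Spec R$ is a nonempty $k$-variety, it has a $k$-rational point $s \in \Spec R(k)$. Pulling back the whole picture along $\Spec k \xrightarrow{s} \Spec R$ gives a variety $Z_s$ over $k$ and a dominant generically finite rational map $\Phi_s \cl X_s = X \dashrightarrow Z_s \times_k \P^1$ of degree $d$; here $Z_s \times \P^1$ is ruled over $k$ because $Z_s$ is a genuine $k$-variety (geometrically integral, hence integral over the algebraically closed field $k$). This $\Phi_s$ is exactly the map we want, so $X$ admits a degree-$d$ rational map to a ruled variety over $k$.

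The one point requiring care — and the step I expect to be the main obstacle — is arranging that after the two base changes and specialization the relevant objects remain \emph{varieties}, i.e. integral, so that ``ruled'' and the degree of the map make sense, and that the degree is preserved rather than merely bounded. Integrality of $X_s$ is not an issue (it follows from $X_L$ being integral, since geometric integrality spreads out and specializes along $k$-points over an algebraically closed field). For $Z_s$ one must spread out $Z_L$ as a \emph{geometrically integral} family — possible after shrinking $\Spec R$ by \cite[IV, 9.7.7]{EGA} (or the cited analogues), using that $Z_L$ is a variety over the not-necessarily-algebraically-... wait, $L$ is algebraically closed, so $Z_L$ is automatically geometrically integral. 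Finally, constancy of the degree of $\Phi$ in the family: the locus where $\Phi_R$ fails to be defined, or fails to be finite, is a proper closed subset that does not dominate $\Spec R$ after shrinking, and on the open where $\Phi_R$ is finite of some degree, that degree is locally constant on the base; so it equals $d$ on the generic fiber and hence on the special fiber $\Phi_s$. Assembling these, the lemma follows; the proof is entirely formal once the spreading-out bookkeeping is set up, and no new geometric input beyond standard limit arguments is needed.
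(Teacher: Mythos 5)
Your proposal is correct, but it takes a genuinely different route from the paper. The paper's proof is a parameter-space argument: it invokes (``by standard arguments'') a countable union of quasiprojective $k$-schemes $\Graph_k^d(X)$ parametrizing closures of graphs of degree-$d$ rational maps from $X$ to ruled varieties, observes that this construction is compatible with base change, $\Graph_L^d(X_L)=\Graph_k^d(X)\times_k L$, and concludes from the fact that a nonempty finite-type scheme over the algebraically closed field $k$ has a $k$-point. Your proof replaces the parameter space with a one-off spreading-out argument: descend the single map $\varphi_L$ to a finitely generated $k$-subalgebra $R\subset L$, shrink $\Spec R$ so that integrality of the fibers of $Z_R$, definedness of $\Phi_R$ on fibers, and constancy of the degree all hold, and then specialize at a $k$-point of $\Spec R$. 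The two arguments rest on the same EGA IV limit results; the paper's version packages them into a moduli-type object whose existence is asserted rather than proved, whereas yours makes the bookkeeping explicit for the one map at hand, at the cost of having to verify fiberwise integrality and degree-constancy by hand (which you do correctly --- in particular you rightly flag that the degree must be preserved exactly, not just bounded, and that $Z_s$ must remain integral, both of which hold after shrinking the base since $L$ and $k$ are algebraically closed). Either approach proves the lemma; yours is arguably more self-contained, while the paper's parameter space gives the cleaner statement that ``admitting such a map'' is a condition detected by $k$-points of a fixed $k$-scheme.
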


\begin{proof}
By standard arguments, there is a countable union $\Graph_k^d(X_k)$ of quasiprojective schemes which parametrizes closures of graphs
\[
\Gamma\subset X\times_k (Z\times_k \P^1)
\]
of rational maps $X \dashrightarrow Z\times \P^1$ to a ruled projective $k$-variety of degree $d$. As a consequence, $X$ admits a degree $d$ map to a ruled variety $\iff \Graph_k^d(X)\ne \emptyset$ (as $k$ is algebraically closed). The point is
\[
\Graph_L^d(X_L) = \Graph_k^d(X_k)\times_k L.
\]
Thus, if $X_L$ has a degree $d$ map to a ruled variety, then $X_k$ has a degree $d$ map to a ruled variety (which is not to say that the original map is defined over $k$).
\end{proof}

We thank Fran\c cois Greer and Burt Totaro for suggesting the following lemma.

\begin{lem}\label{VGen}
Working over $\C$, let $\X \rightarrow T$ be a family of varieties over a smooth curve $T$ with geometric generic point $\etabar$. If a very general fiber $\X_{t}$ admits a rational map of degree $d$ to a ruled variety then so does $\X_\etabar$.
\end{lem}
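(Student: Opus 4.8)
The plan is to reduce the statement to an application of Lemma~\ref{BaseChangeMaps} together with a spreading-out argument, using the fact that the locus of fibers admitting a degree $d$ map to a ruled variety is cut out by countably many closed conditions. First I would choose a countable collection of varieties witnessing the hypothesis: for a very general $t\in T$, the fiber $\X_t$ admits a degree $d$ rational map to a ruled variety, so by Lemma~\ref{BaseChangeMaps} (applied over $\overline{\C(t)}$, but concretely just over $\C$) the fiber $\X_t$ lies in the image of one of the countably many families of graphs $\Graph^d_\C(\X_t)$. Since these are countably many constructible conditions and a very general $t$ satisfies them, a standard argument shows there is a single parameter scheme $\Gamma \to T'$, where $T' \to T$ is a dominant generically finite morphism, whose fiber over a very general $t$ is the closure of the graph of such a rational map $\X_t \dashrightarrow Z_t \times \P^1$.

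Next I would spread this out. After shrinking $T$ (which is harmless since ``very general'' is preserved), I can assume we have a family $\Gamma_{T'} \subset \X_{T'} \times_{T'}(\Zc_{T'}\times \P^1)$ over a dominant generically finite $T' \to T$, together with a projective family of ruled varieties $\Zc_{T'}\times \P^1 \to T'$, such that the projection $\Gamma_{T'}\to \X_{T'}$ is birational and $\Gamma_{T'}\to \Zc_{T'}\times\P^1$ is generically finite of degree $d$ over a dense open. Passing to the geometric generic point $\overline{\eta}$ of $T'$ (equivalently of $T$, since $T'\to T$ is generically finite), the fiber $\Gamma_{\overline{\eta}}$ gives a rational map $\X_{\overline{\eta}} \dashrightarrow \Zc_{\overline{\eta}}\times \P^1$. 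The degree of this map is $d$ by generic flatness / semicontinuity of fiber dimension: the degree over the generic point is the same as the degree over a very general closed point, because the branch locus and the locus where the map fails to be finite are closed and do not dominate $T'$ once we have shrunk.

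The one remaining point is that $\Zc_{\overline{\eta}}$ is a ruled variety over $\overline{\eta}$. This is immediate: $\Zc_{T'}\times \P^1 \to T'$ is literally a $\P^1$-bundle over $\Zc_{T'}$, so every geometric fiber $\Zc_{\overline{\eta}}\times \P^1$ is ruled by construction. Hence $\X_{\overline{\eta}}$ admits a degree $d$ dominant rational map to a ruled variety, as desired.

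The main obstacle, and the only step requiring care, is the spreading-out/constructibility argument showing that a \emph{single} family of graphs works for a very general $t$ — i.e. upgrading ``for very general $t$ there exists a graph'' to ``there is a component of $\Graph^d$ dominating $T$ (after a generically finite base change).'' This is exactly where one uses that $\Graph^d_\C(\X_t)$ varies algebraically over $T$: the total space $\Graph^d_{\X/T}$ is a countable union of $T$-schemes locally of finite type, and since the union of their images covers a very general point of $T$, at least one image must be dense, hence (after base change to normalize and make the universal graph available) dominates $T$. Once that component is fixed, everything else is routine semicontinuity.
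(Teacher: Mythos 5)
Your argument is correct, but it is genuinely different from the one in the paper. The paper's proof is a one-line transcendental trick: by \cite[Lem.\ 2.1]{Vial13} there is an abstract field isomorphism $\etabar\cong\C$ under which $\X_\etabar$ becomes isomorphic, as an abstract scheme, to the very general fiber $\X_t$; composing with the given degree $d$ map to a ruled variety finishes the proof. Your route instead relativizes the parameter space $\Graph^d$ from the proof of Lemma~\ref{BaseChangeMaps} over $T$, observes that a countable union of constructible images cannot cover the uncountable very general locus of a curve unless one image is dense, and then specializes the resulting dominating family of graphs to the geometric generic point. The trade-off: the paper's argument is essentially free once Vial's lemma is quoted, but it is noneffective (it uses a choice-dependent isomorphism $\etabar\cong\C$) and is tied to working over $\C$ or a field of the same cardinality; your argument is longer but purely algebro-geometric, works over any uncountable algebraically closed field, and makes visible the actual mechanism (a single component of the relative graph space dominating $T$), which is also closer in spirit to how $\Graph^d$ is already used in Lemma~\ref{BaseChangeMaps}. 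Two small points to tighten: (i) you should note that after shrinking $T'$ you may assume the fibers $\Zc_{t'}$ are geometrically integral (a constructible condition that holds for very general $t'$), so that $\Zc_{\etabar}\times\P^1$ really is a \emph{ruled variety} and not just a product with reducible first factor; and (ii) the identification of the geometric generic points of $T'$ and $T$ uses that $\C(T')/\C(T)$ is finite, which is why you cut the dominating component of the graph space down to a generically finite multisection rather than working over the whole component. Neither is a gap, just bookkeeping worth recording.
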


\begin{proof}
By \cite[Lem. 2.1]{Vial13}, there is a field isomorphism of $\etabar$ with $\C$ such that $\X_\etabar$ becomes isomorphic to $\X_{t}$ as abstract schemes. The lemma follows by composing this isomorphism with the rational map of degree $d$ from $\X_{t}$ to a ruled variety.
\end{proof}

Using Theorem~\ref{DegMaps}, we can now prove Proposition~\ref{ExSpec} and Corollary~\ref{IrrAb}.

\begin{proof}[Proof of Proposition~\ref{ExSpec}]
Let
\[
\pi \cl \X \rightarrow T
\]
be a family of smooth projective varieties as in Proposition~\ref{ExSpec}. By Theorem~\ref{DegMaps} and Lemma~\ref{VGen}, it follows that the special fiber $\X_{0}$ admits a dominant and generically finite rational map
\[
\varphi \cl \X_{0} \dashrightarrow Y \bir Z \times \P^{1}
\]
such that $\deg(\varphi) \leq d$, and $Z$ is smooth. Consider the MRC fibration of $Z$, given by
\[
\psi\cl Z\dra B
\]
where $B$ is smooth. Then $B$ has dimension 0, 1, or 2. We treat each dimension separately. In the case $B$ has dimension 0, then $Z$ is in fact rational, and we have $\irr(\X_0)\le d$.

Now we rule out the cases where $\dim(B)=1$ or 2. If $\dim(B)=1$, then $B$ must have positive geometric genus. But there are no dominant rational maps from $\X_0$ to such a curve (as $h^{1,0}(\X_0)=0$). If $\dim(B)=2$, then we are in the case (2) of Proposition~\ref{ExSpec}, so $\X_0$ is a strict Calabi-Yau threefold. It follows that $B$ is a Kodaira dimension 0 surface (see \cite{Birkar09}). By the classification of surfaces there is a proper \'etale cover $B'\ra B$ such that $\omega_{B'}\cong\Oc_{B'}$. As $\X_0$ is simply connected, the map from $\X_0$ to $B$ factors through $B'$. But this contradicts the fact that strict Calabi-Yau threefolds have $h^{p,0}=0$ for $p=1,2$.
\end{proof}

\begin{proof}[Proof of Corollary~\ref{IrrAb}]
The quotient map from $A$ to the Kummer surface $A/\pm 1$ has degree 2, so it suffices to prove $\irr(A/\pm 1)= 2$. The main result of \cite{Chen19} can be rephrased by saying that a very general Kummer surface has degree of irrationality 2. It follows that one can put $A/\pm 1$ in a family over a curve such that the very general member has degree of irrationality equal to 2. Taking a simultaneous resolution gives a family of K3 surfaces, so by Proposition~\ref{ExSpec} every member of the family has degree of irrationality equal to 2.
\end{proof}

\section{Irrationality of Fano hypersurfaces}

The goal of this section is to prove Theorem~\ref{IrrHyp}. For a hypersurface $X \subset \P^{n+1}$ of degree $d$, recall from the introduction that the codegree of $X$ is $e \colonequals n+2-d$. We follow an idea of Koll\'ar's \cite{Kollar95}, which was used to prove non-ruledness of certain Fano hypersurfaces. Koll\'ar reduces to positive characteristic and observes that if a smooth projective variety $X$ is ruled, then no sheaf of $i$-forms $\wedge^i \Omega_{X}$ can contain a big line bundle. The main observation in this section (Lemma~\ref{InjLB}) is that if $X$ admits a separable rational map of degree $d$ to a ruled variety, then no sheaf of $i$--forms $\wedge^i \Omega_{X}$ can contain a line bundle which separates $2d$ points on an open set. This allows us to directly apply Koll\'ar's degeneration argument (albeit in a different degree range) to deduce Theorem~\ref{IrrHyp}.

\begin{definition}
Let $X$ be a variety over an algebraically closed field $k$ and let $\M$ be a line bundle on $X$. We say that $H^0(X,\M)$ \textit{separates $\ell$ points on an open set} if there is an open set $U\subset X$ such that for any distinct $\ell$ points $x_1, \ldots, x_{\ell}\in U$, there is a section $s\in H^0(X,\M)$ which vanishes on $x_1, \ldots,x_{\ell-1}$ but not $x_{\ell}$.
\end{definition}

\begin{example}\label{ExSep}
Let $X$ and $Y$ be projective varieties of dimension $n$ over an algebraically closed field $k$ and suppose there is a map
\[
\psi\cl X\ra Y
\]
which is a dominant, generically finite, purely inseparable morphism. Then over the open set $U\subset Y$ where $\psi$ is finite, there is a bijection on $k$ points
\[
\psi^{-1}(U)(k)\cong U(k).
\]
As a consequence, if there is a line bundle $\M$ on $Y$ such that $H^0(Y,\M)$ separates $\ell$ points on an open set, then $H^0(X,\psi^*(\M))$ separates $\ell$ points on an open set in $X$.
\end{example}

\begin{lem}\label{InjLB}
Let $X$ be a projective variety over an algebraically closed field and $\M$ a line bundle on $X$ such that $H^0(X,\M)$ separates $2\delta$ points on an open set. Suppose that there is an injection
\[
\M\hookrightarrow \wedge^i\Omega_{X}
\]
for some $i>0$. If there is a dominant, separable, generically finite rational map
\[
\varphi\cl X\dra Z\times \P^1
\]
to a ruled variety, then $\deg(\varphi)\ge \delta+1$.
\end{lem}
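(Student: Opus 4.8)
The plan is to extend Koll\'ar's observation — that on a smooth ruled variety no $\wedge^i\Omega$ can contain a big line bundle — to a general \emph{ruling curve of degree $d$}. Suppose for contradiction that $\varphi$ has degree $d\le\delta$. A linear system separating $2\delta$ points on an open set $U$ automatically separates any $2d\le 2\delta$ points of $U$ (pad the configuration with extra general points of $U$), so it suffices to contradict the combination: $\deg(\varphi)=d$, an injection $\M\hookrightarrow\wedge^i\Omega_X$ with $i>0$, and the $2d$-point separation property of $H^0(X,\M)$ on some open $U$. First I would replace $\varphi$ by a morphism without invoking resolution of singularities (unavailable in characteristic $p$): let $X'$ be the normalization of the closure of the graph of $\varphi$ in $X\times(Z\times\P^1)$, with $\pi\cl X'\to X$ proper birational and $\varphi'\cl X'\to W:=Z\times\P^1$ a morphism, still dominant, separable, and generically finite of degree $d$ (and we may take $Z$ projective). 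Since $X'$ is normal, its smooth locus $X'^{\mathrm{sm}}$ has complement of codimension $\ge 2$, and the pulled-back map $\pi^*\M\to\wedge^i\Omega_{X'}$ is injective over $X'^{\mathrm{sm}}$.

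Next I would pass to a general ruling curve. Put $q:=p_1\circ\varphi'\cl X'\to Z$; as $k(W)=k(Z)(t)$ is separably generated over $k(Z)$ and $\varphi'$ is separable, $q$ is separable. For general $z\in Z^{\mathrm{sm}}$ the fibre $C:=q^{-1}(z)$ lies in $X'^{\mathrm{sm}}$, is a local complete intersection curve (cut out near itself by $q^*$ of a regular system of parameters at $z$) with trivial conormal bundle $N^*_{C/X'}\cong\O_C^{\oplus m}$, $m:=\dim Z=n-1$, and $f:=\varphi'|_C\cl C\to L_z\cong\P^1$ is finite and separable of degree $d$. Taking $i$-th exterior powers in the conormal sequence equips $\wedge^i\Omega_{X'}|_C$ with a filtration whose relevant graded pieces are $\O_C^{\oplus\binom{m}{i}}$ and $K_C^{\oplus\binom{m}{i-1}}$ (here $i>0$ is used). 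The restriction of $\pi^*\M\hookrightarrow\wedge^i\Omega_{X'}$ to a general $C$ remains injective, and composing with the projection onto the canonical part gives a dichotomy: either \textbf{(A)} $\pi^*\M|_C$ embeds into $K_C$, so $\pi^*\M|_C\cong K_C(-E)$ for some effective $E\ge 0$; or \textbf{(B)} $\pi^*\M|_C$ embeds into $\O_C^{\oplus\binom{m}{i}}$, so $\deg(\pi^*\M|_C)\le 0$.

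To reach the contradiction I would take $z$ general, then two general fibres $F_0,F_\infty$ of $f$ — each $d$ reduced points lying in $\pi^{-1}(U)$, in the isomorphism locus of $\pi$, and disjoint from $\mathrm{supp}(E)$ — so that their $2d$ distinct images in $U$, together with a chosen $x^*\in F_\infty$, produce (via the separation hypothesis) a section $s\in H^0(X,\M)$ vanishing at the other $2d-1$ images but not at the image of $x^*$. Then $t:=(\pi^*s)|_C$ is a nonzero section of $\pi^*\M|_C$ vanishing on the effective divisor $F_0+F_\infty-x^*$ of degree $2d-1\ge 1$. In case \textbf{(B)} this contradicts $\deg(\pi^*\M|_C)\le 0$. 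In case \textbf{(A)}, writing $\pi^*\M|_C=K_C(-E)$, Serre duality and Riemann--Roch turn the existence of $t$ into the statement that $x^*$ is a base point of $|E+F_0+F_\infty|$; but the effective divisor $E+F_0+f^{-1}(a)$ belongs to $|E+F_0+F_\infty|$ for general $a$ and misses $x^*$ (as $x^*\in F_\infty$ is disjoint from $F_0$, from $f^{-1}(a)$, and from $\mathrm{supp}(E)$), a contradiction. This completes the proof that $\deg(\varphi)\ge\delta+1$; the use of \emph{two} fibres (rather than one) is what the hypothesis ``$2\delta$'' buys, and it is needed because a reducible ruling curve can have components mapping isomorphically to $\P^1$, on which a single fibre is one point and gives no constraint.

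The step I expect to be the main obstacle is the positive-characteristic analysis in the second paragraph. Generic smoothness can fail for separable morphisms in characteristic $p$, so the general ruling curve $C$ need not be smooth (nor irreducible); one must therefore run the computation with the dualizing sheaf $\omega_C=K_{X'^{\mathrm{sm}}}|_C$ — a line bundle because $C$ is lci — in place of $K_C$, with arithmetic genus in place of genus and Gorenstein Serre duality and Riemann--Roch, after checking that the torsion appearing in $\Omega_C$ and its exterior powers does not spoil the filtration of $\wedge^i\Omega_{X'}|_C$, and one must carry out the argument one component of $C$ at a time. The remaining ingredients are routine: exterior algebra and Riemann--Roch on a single curve, plus keeping the finitely many bad loci (singular points of $C$, the ramification of $\varphi'$ along $C$, $\mathrm{supp}(E)$, non-reduced fibres of $f$) away from the $2d$ chosen points by general position.
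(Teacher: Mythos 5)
Your strategy reaches the right contradiction but by a genuinely different route from the paper's. You restrict the $i$-forms to the preimage $C$ of a general ruling line $\{z\}\times\P^1$ and run a filtration-plus-Riemann--Roch argument on $C$; the paper instead pushes the forms \emph{forward}: it forms the normalized graph $\Gamma$ with $\mu\cl\Gamma\ra X$ birational and $\psi\cl\Gamma\ra Z\times\P^1$ generically finite and separable, and applies the de Jong--Starr trace map for differential forms $\Tr^i_\psi\cl H^0(\Gamma,\wedge^i\Omega_\Gamma)\ra H^0\bigl(Z\times\P^1,(\wedge^i\Omega_{Z\times\P^1})^{\vee\vee}\bigr)$, which over the \'etale locus is the sum over fibers. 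Since $(\wedge^i\Omega_{Z\times\P^1})^{\vee\vee}$ splits as a piece pulled back from $Z$ plus a piece twisted by $\pi_2^*\Omega_{\P^1}$, every global section is constant along rulings; a section of $\M$ vanishing at $2b-1$ of the $2b$ preimages of two general points on a single ruling then traces to a section that is non-constant along that ruling. This is the same two-fiber/separation punchline as yours, executed on the smooth variety $Z\times\P^1$ rather than on the curve $C$, and the payoff is precisely the avoidance of the difficulties you flag at the end: the paper never confronts the fact that in characteristic $p$ the general ruling curve may be singular and reducible, with components that need not be Gorenstein and whose dualizing sheaves differ from $\omega_C$ restricted to them by a conductor term, nor the torsion in the exterior powers of $\Omega_C$ that muddies your dichotomy (A)/(B). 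Those points in your sketch are real and not yet resolved as written: the induced map from $\pi^*\M|_C$ to a graded piece of your filtration is only \emph{generically} nonzero rather than injective, the case (A) duality argument must be run on each integral component separately (and a component may map to $\P^1$ with degree $1$, so the two-fiber count must also be tracked componentwise), and the identification of the ``canonical part'' with a line bundle needs Gorenstein-ness. I believe all of this can be patched by locating the dichotomy at generic points and using Riemann--Roch and duality for torsion-free rank-one sheaves on each component, but a complete writeup would either have to carry out those patches in full or switch to the trace-map formulation, which yields the same conclusion with none of the curve-level pathology.
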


\begin{proof}
Let $b \coloneqq \deg(\varphi)$. Without loss of generality, we may assume that $X$ and $Z$ are normal. Let $\Gamma$ be the normalization of the closure of the graph of $\varphi$. This gives two regular maps $\mu$, $\psi$ which make the diagram commute:
\[
\begin{tikzcd}
&\Gamma\arrow[dl,swap,"\mu"]\arrow[dr,"\psi"]&\\
X\arrow[rr,dashed,"\varphi"]&&Z\times \P^1.
\end{tikzcd}
\]
Note that $\mu$ is birational. By Example~\ref{ExSep} we have that $\mu^*(\M)$ is a line bundle which separates $2\delta$ points on an open set in $\Gamma$ and injects into $\wedge^i\Omega_\Gamma$.

Recall that there is a trace map
\[
\Tr^i_\psi\cl H^0(\Gamma,\wedge^i\Omega_\Gamma)\ra H^0(Z\times \P^1,(\wedge^i\Omega_{Z\times \P^1})^{\vee \vee})
\]
(see \cite[Prop. 3.3]{dJS04} and \cite{dJS08}) which extends the usual trace map for finite morphisms in characteristic 0. Over the dense open set in $Y$ where $\psi$ is \'etale, $\Tr_{\psi}^{i}$ corresponds to the sum over fibers. Let $\pi_1$ (resp. $\pi_2$) denote the projection of $Z\times \P^1$ onto $Z$ (resp. $\P^1$). By a computation,
\[
(\wedge^i\Omega_{Z\times \P^1})^{\vee \vee}\cong (\wedge^i(\pi_1^*\Omega_Z))^{\vee\vee}\oplus\left( (\wedge^{i-1}(\pi_1^*\Omega_Z))^{\vee\vee}\otimes \pi_2^*(\Omega_{\P^1})\right).
\]
So any section of $H^0(Z\times\P^1,(\wedge^i\Omega_{Z\times \P^1})^{\vee \vee})$ is necessarily constant along $\{ z \} \times \P^1$ for every $z \in Z$.

However, if we take a general point $z\in Z$ and two general points $z_1,z_2\in z\times \P^1$, then the preimage of these points will consist of $2b$ distinct points
\[
\psi^{-1}(\{z_1,z_2\})=\{\gamma_1,\dots,\gamma_{2b}\}.
\]
If $b\le \delta$, then there will be a section
\[
\alpha\in H^0(\Gamma,\mu^*(\M))
\]
which vanishes on $\gamma_1,\dots,\gamma_{2b-1}$ but does not vanish on $\gamma_{2b}$. Tracing $\alpha$ as an $i$-form gives a section
\[
\Tr^i_\psi(\alpha) \in H^0(Z\times \P^1,(\wedge^i\Omega_{Z\times \P^1})^{\vee \vee})
\]
which vanishes on $z_1$ but not on $z_2$, yielding a contradiction. Therefore $b\ge \delta+1$.
\end{proof}

\begin{construction}
Now we recall Koll\'ar's degeneration argument \cite[\S 5]{Kollar95}, which is attributed to Mori. Let $R$ be a DVR with an algebraically closed residue field $\kappa$ of characteristic $p$ and fraction field $\eta$ which is countable of characteristic 0. Koll\'ar shows that over $T=\Spec(R)$, one can construct an irreducible normal variety
\[
\pi\cl\X\ra T
\]
such that $\X_\eta\subset \P^{n+1}_\eta$ is a hypersurface of degree $pd$ and $\X_\kappa$ is a reduced degree $p$ inseparable cover of a smooth degree $d$ hypersurface $Y\subset \P^{n+1}_\kappa$ with ``simple" singularities. Koll\'ar gives an explicit resolution of singularities of $\X_\kappa$:
\[
\begin{tikzcd}
\X'_\kappa\arrow[r,"\mu"]\arrow[rr,swap,bend right=30,"\nu"] &\X_\kappa\arrow[r]&Y
\end{tikzcd}
\]
and shows that $\M \colonequals \nu^*(\Oc_{\P_\kappa^{n+1}}(pd+d-n-2))$ injects into $\wedge^{n-1}\Omega_{\X'_\kappa}$.
\end{construction} 

Having recalled Koll\'ar's construction, we are ready to prove Theorem~\ref{IrrHyp}.

\begin{proof}[Proof of Theorem~\ref{IrrHyp}]
To start we consider the case of hypersurfaces of degree $pd$ or codegree $e=n+2-pd$ degenerating to $p$-fold covers of degree $d$ hypersurfaces, as in Koll\'ar's construction above. We know that $\nu$ is purely inseparable, and the sections of $H^0(\P^{n+1},\Oc_{\P^{n+1}_\kappa}(d-e))$ separate every set of $d-e$ points. So by Example~\ref{ExSep} we see that $H^0(\X'_\kappa ,\M)$ separates $d-e$ general points. By Lemma~\ref{InjLB} it follows that $\X_\kappa$ admits no separable rational maps to a ruled variety with degree $\le \lfloor (d-e)/2\rfloor$.

Let us assume that $(d-e)\ge 2p-2$. Using the fact that $p$ divides the degree of any inseparable map, it follows that there are no rational maps at all from $\X_\kappa$ to a ruled variety with degree $\le p-1$. Thus Theorem~\ref{DegMaps} implies  $\X_\etabar$ admits no maps of degree $\le p-1$ to a ruled variety. By base changing $\etabar$ to $\C$ and applying Lemma~\ref{BaseChangeMaps} it follows that if $X\subset \P_\C^{n+1}$ is a very general complex hypersurface of degree $pd$ with $d-e\ge 2p-2$ then every rational map from $X$ to a ruled variety has degree $\ge p$.

Now we must deal with the case where $X$ is very general but its degree is not equal to $pd$ for some ``useful" prime $p$. Assume that
\[
n+2=(pd+f)+e.
\]
where $f$ is the remainder of $n+2-e$ modulo $p$. We claim that if $d-e-f\ge 2p-2$ then any map from $X$ to a ruled variety has degree $\ge p$. This can be proved by degenerating $X$ to a union of a very general degree $pd$ hypersurface and $f$ hyperplanes. The claim then follows from Theorem~\ref{DegMaps}(2) and the previous paragraph.

Therefore if $X$ has codegree $e$ in $\P^{n+1}$, we want to consider prime numbers $p$ such that there exist integers $d$ and $f$ with the property that
\[
n+2=(pd+f)+e\text{ and }d-e-f\ge 2p-2. \tag{$\ast$}
\]
Thus we have shown that
\[
\irr(X)\ge \max\{p \text{ prime} \mid p \text{ satisfies }(\ast)\}.
\]
It remains to estimate the right hand side. Suppose a prime $p$ satisfies
\[
4p^2\le n+2\text{ and }e\le p+1.
\]
One can verify that property $(\ast)$ holds. In other words,
\[
\irr(X)\ge \max\left(\{1\}\cup \{p\text{ prime} \mid 4p^2\le n+2 \text{ and }e\le p+1\}\right).
\]
By Bertrand's postulate, there is always a prime between any natural number $m$ and $2m$. Therefore, we can give the lower bound
\[
\irr(X)\ge \frac{\sqrt{n+2}}{4}
\]
as soon as the right hand side is at least $e-1$.
\end{proof}

\begin{remark}
In Theorem~\ref{IrrHyp} one can take $N=(4e-4)^2-2.$
\end{remark}

\begin{remark}
We note that the above bound is not optimal. One might hope that the bound on $\irr(X)$ can be improved to a linear bound. If one adapts the second paragraph of the proof of Theorem~\ref{IrrHyp}, the first new bounds occur using $p=5$, $n=35$, and $e=1$. In other words, a very general degree $35$ complex hypersurface $X\subset\P^{35}$ has $\irr(X)\ge 4$.
\end{remark}

%
%

\bibliographystyle{siam} 
\bibliography{Biblio}

\end{document}